\newcommand{\supeq}{\geqslant}
\newcommand{\infeq}{\leqslant}
\newtheorem{de}{Definition}
\newtheorem{theo}{Theorem}
\newtheorem{proposition}{Proposition}
\newtheorem{coro}{Corollary}
\newtheorem{lemma}{Lemma}
\newtheorem{remark}{Remark}
\newenvironment{proof}[1][]{\textbf{Proof #1:~}}{\hfill$\square$\\}
\begin{document}

\begin{frontmatter}

\title{Wirtinger-based Exponential Stability for Time-Delay Systems\thanksref{footnoteinfo}} 

\date{\today}  

\author[First]{Matthieu Barreau} 
\author[First]{Alexandre Seuret} 
\author[First]{Fr{\'e}d{\'e}ric Gouaisbaut} 

\thanks[footnoteinfo]{This work is supported by the ANR project SCIDiS contract number 15-CE23-0014.}

\address[First]{LAAS-CNRS, Universit\'e de Toulouse, CNRS, UPS, Toulouse, France. (e-mail: barreau,seuret,fgouaisb@laas.fr).
}

\begin{abstract}
This paper deals with the exponential stabilization of a time-delay system with an average of the state as the output. A general stability theorem with a guaranteed exponential decay-rate based on a Wirtinger-based inequality is provided. Variations of this theorem for synthesis of a controller or for an observer-based control is derived. Some numerical comparisons are proposed with existing theorems of the literature and comparable results are obtained but with an extension to stabilization.
\end{abstract}

\begin{keyword}
Time-delay systems, Exponential stability, Lyapunov methods, Wirtinger inequality, Controller and observer synthesis
\end{keyword}

\end{frontmatter}

\section{Introduction}

Time-delay systems may arise in practice for many reasons. For example, it appears in mechanical modeling like vibration absorber (see \cite{OLGAC199493}) or delayed resonator (see \cite{opac-b1100602}) which are intrinsically with delay and neglecting it leads to an over-simplification of the initial problem. That is why it is important to have a theory which can provide a framework to work with. Indeed, although time-delay systems are a class of dynamical systems widely studied in control theory, the honored method like root-locus to assess stability are not straightforward, particularly to provide robust stability criteria.


Three main approaches have been developed to study the stability of the such equations. The first one relies on the characteristic equation (see \cite{5687820} and references therein and \cite{BREDA2006305}) and pole location. These techniques give nearly the exact stability conditions but suffer from several drawbacks. First of all, as they are based on pole location approximations, they are not appropriated for uncertain and/or time-varying delay systems. Furthermore, these approaches could not also be used easily for the design of controllers or observers. 

Other approaches have been developed based either on the robust approach or Lyapunov techniques. The robust approach consists of merging the delay uncertainty into an uncertain set and use classical robust analysis as Small Gain Theorem (\cite{fridman2008input}), Quadratic Separation (\cite{gouaisbaut2006delay}), Integral Quadratic Constraints (\cite{kao2007stability}). Techniques based on Lyapunov-Krasovskii functionals uses the LMI framework developed in the book by \cite{LMI}. This method enables exponential convergence with a guaranteed decay rate, robust analysis, synthesis of controllers and extension to multiple time-varying delay systems. 

Despite these advantages, this approach is very conservative. The complete Lyapunov-Krasovskii functional is known (\cite{kharitonov2003lyapunov}) but too complex to be efficiently solved and even studied. A first step is to introduce a simplified functional. Some works have been done (for example by \cite{seuret:hal-01065142}) on how to relax the problem such that the conservatism introduced by the choice of the Lyapunov-Krasovskii functional is measured. The second step is to use integral inequalities to transform some non-manageable terms like $\int_{t-h}^t e^{-2\alpha s} x^{\top}(t+s) R x(t+s)ds$ into an expression suitable to be transformed into LMIs. This last step is important because there exists powerful and efficient algorithm to find solutions of LMIs in polynomial time. The commonly used inequalities in the two last steps are described by \cite{opac-b1100602} and rely for most of them on Jensen's inequality.  An important amount of papers have been dedicated to reduce the conservatism induced by such inequalities. Recently, \cite{wirtinger} introduced a Wirtinger-based inequality, known to be less conservative. The present paper uses this framework to state the exponential convergence with a guaranteed decay rate and synthesis of controllers.

Two approaches have been widely used in the literature to assess the exponential stability. The first one relies on a change of variable $z(t) = e^{\alpha t}x(t)$ and it can be proven that establishing asymptotic stability of $z$ implies an exponential stability of $x$ with a decay rate of $\alpha$ (\cite{tds}). The second one is based on some modified Lyapunov-Krasovskii functionals which incorporate in their structures the exponential rate. 

Since one of the first article by \cite{mori1982estimate} on exponential convergence of time-delay systems, several exponential estimates emerged from the literature: \cite{mondie2005exponential}, \cite{lam} or more recently \cite{trinh2016exponential}. But only a few of them used the Wirtinger-based inequality developed by \cite{wirtinger} to help synthesize observers or controllers for a discrete or distributed delay system. The aim of this article is to stabilize a specific class of time-delay systems as described in the problem statement using this inequality.

In Section 2, the problem is stated and some useful lemmas are reminded. Then in Section 3, an extension of exponential stability theorems with a Wirtinger-based inequality is introduced. The general results of the previous section are used for the computation of a feedback gain for a given system in Section 4 while Section 5 is dedicated to the design of an observer-based control. Finally, in the last section, a numerical comparison of efficiency between classical theorems and the one derived in this paper is performed.

\textbf{Notations.} Throughout the paper, $\mathbb{R}^n$ stands for the $n$ dimensional Euclidian space, $\mathbb{R}^{n \times m}$ for the set of all $n \times m$ matrices. $\mathbb{S}^n$ is the subset of $\mathbb{R}^{n \times n}$ of symmetric matrices such that $P \in \mathbb{S}_+^n$ or equivalently $P \succ 0$ denotes a symmetric positive definite matrix.  For any square matrices $A$ and $B$, the operations '$\text{He}$' and '$\text{diag}$' are defined as follow: $\text{He}(A) = A + A^{\top}$ and $\text{diag}(A,B) = \left[ \begin{smallmatrix}A & 0\\ 0 & B \end{smallmatrix} \right]$. The notations $I_n$ and $0_{n \times m}$ denote the $n$ by $n$ identity matrix and the null matrix of size $n  \times m$. The state variable $x$ can be represented using the Shimanov notation (\cite{kolmanovskii}):
$
	x_t: \left\{ \begin{array}{rccl}
		& [-h, 0] & \to & \mathbb{R}^n\\
		& \tau    &  \mapsto & x(t+\tau)
	\end{array}
	\right.
$

\section{Problem Statement}

\subsection{System data}

%
The system to be controlled is the following one:
\begin{equation}
	\left\{
		\begin{array}{lcl}
			\dot{x}(t) = Ax(t) + Bu(t), & \ & \forall t \supeq 0, \\
			\displaystyle y(t) = C \frac{1}{h} \int_{-h}^0 x_t(s) ds, & \ & \forall t \supeq 0, \\
			x(t) = \phi(t), & \ & \forall t \in [-h, 0],
		\end{array}
	\right.
	\label{eq:sys}
\end{equation}
with $x(t) \in \mathbb{R}^n$ the instantaneous state vector, $h$ the time delay, $\phi$ the initial state function and $A$, $B$, $C$ three matrices of appropriate dimensions. Then, the output is not the instantaneous state but its average on a sliding window of time $[t-h, t]$, which differs significantly from classical control problems. Numerous measurement tools, in electronics for example, are measuring an average and not the instantaneous state.

The purpose of this paper is to find a control input $u$ computed only with the output measurement vector $y$ such that System \eqref{eq:sys} is exponentially stable with a decay rate of at least $\alpha \supeq 0$. First of all, we recall the definition of exponential stability extended to time-delay systems:
\begin{de}
	[\cite{Chen200795}] System \eqref{eq:sys} is said to be $\alpha$-stable if there exists $\alpha \supeq 0$ and $\gamma \supeq 1$ such that for every solution $x$ of \eqref{eq:sys} with a differentiable initial condition $\phi$ defined on $[-h; 0]$, the following exponential estimate holds:
	\begin{equation}	
		\forall t > 0, \left| x(t) \right| \infeq \gamma e^{-\alpha t} \left\lVert \phi \right\rVert_W 
		\label{eq:expoConvergence}
	\end{equation}
	where 
	$$\left\lVert \phi \right\rVert_W = \max\{ ||\phi||_h, ||\dot{\phi}||_h\} \text{ and } \displaystyle \left\lVert \phi \right\rVert_h = \sup_{\theta \in [-h, 0]} \left\lVert \phi(\theta) \right\rVert. $$
\end{de}

\begin{remark}
	The norm $\lVert \cdot \rVert_W$ is sightly different from the one of \cite{mondie2005exponential} who do not consider a norm depending on the derivative $\dot{x}$. This problem has also been dealt by \cite{norm} by introducing the sum and not the maximum. These definitions are nevertheless equivalent.
\end{remark}

\subsection{Preliminary Results}

We recall two lemmas useful in the sequel. The first lemma, introduced by \cite{wirtinger} proposes an integral inequality which is used in the proof of the main theorem. 
\begin{lemma} [Wirtinger-based inequality] For a given matrix $R \in \mathbb{S}^n_+$, the following inequality holds for all continuously differentiable function $x$ in $[t-h, t] \to \mathbb{R}^n$:
	\begin{equation*}
		\int_{t-h}^t \dot{x}^{\top}(s) R \dot{x}(s) ds \supeq \frac{1}{h} \xi^{\top}(t) F_2^{\top} \tilde{R} F_2 \xi(t),
	\end{equation*}
	where 
	\[
		\begin{array}{ccc}
			F_2 = \left[ \begin{matrix} I_n & -I_n & 0_n \\ I_n & I_n &  -2I_n \end{matrix} \right], & \ \ \ \ \ &
			\tilde{R} = \text{diag}\left( R, 3R \right), \\
		\end{array} \\
	\]
	\[
		\xi(t) = \left[ \begin{matrix} x^{\top}(t) & \ x^{\top}(t-h) \ & \frac{1}{h} \int_{t-h}^t x^{\top}(s)ds \end{matrix} \right]^{\top}.
	\]
	\label{sec:wirtinger}
\end{lemma}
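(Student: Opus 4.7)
The plan is to prove the inequality via an orthogonal projection argument in the Hilbert space $L^2([t-h,t];\mathbb{R}^n)$ equipped with the $R$-weighted inner product $\langle f, g\rangle_R = \int_{t-h}^t f^\top(s) R g(s)\, ds$. The right-hand side of the claimed bound is precisely the squared $R$-norm of the orthogonal projection of $\dot{x}$ onto a cleverly chosen two-dimensional subspace, and the inequality is then simply Bessel's inequality (a.k.a.\ the Pythagorean theorem for projections).

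First, I would introduce two shifted Legendre-type polynomials on $[t-h, t]$: the constant $p_0(s) = 1$ and the centered affine polynomial $p_1(s) = 1 - 2(t-s)/h$, which satisfy the orthogonality relations $\int_{t-h}^t p_0(s)p_1(s)\, ds = 0$, $\int_{t-h}^t p_0^2\, ds = h$, and $\int_{t-h}^t p_1^2\, ds = h/3$. Next I would compute the Fourier coefficients of $\dot{x}$ against this family. The $p_0$-coefficient equals $\frac{1}{h}\int_{t-h}^t \dot{x}(s)\, ds = \frac{1}{h}(x(t)-x(t-h))$ by the fundamental theorem of calculus, matching the first block row of $F_2\xi(t)$. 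The $p_1$-coefficient is $\frac{3}{h}\int_{t-h}^t p_1(s)\dot{x}(s)\, ds$; integrating by parts (using $p_1(t)=1$, $p_1(t-h)=-1$, and $p_1'(s)=2/h$) reduces this to $\frac{3}{h}\bigl(x(t)+x(t-h)-\frac{2}{h}\int_{t-h}^t x(s)\, ds\bigr)$, which is exactly the second block row of $F_2\xi(t)$.

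Finally, since $R \succ 0$, the projection of $\dot{x}$ onto $\mathrm{span}\{p_0 I_n, p_1 I_n\}$ has $R$-norm at most that of $\dot{x}$, which after expanding in the orthogonal basis reads
\[
\int_{t-h}^t \dot{x}^\top(s) R \dot{x}(s)\, ds \supeq \frac{1}{h}(F_2\xi)^\top R (F_2\xi)_1 + \frac{3}{h}(F_2\xi)_2^\top R (F_2\xi)_2,
\]
where $(F_2\xi)_i$ denotes the $i$-th block row. This is exactly $\frac{1}{h}\xi^\top F_2^\top \tilde{R} F_2 \xi$ with $\tilde{R} = \mathrm{diag}(R, 3R)$, as claimed.

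The main obstacle is the integration-by-parts step producing the $p_1$-coefficient: the whole point of the improvement over the classical Jensen bound (which corresponds to projecting only onto $p_0$) is that the second coefficient closes on $\xi(t)$ rather than producing new quantities such as $\int s x(s)\, ds$. Choosing the Legendre basis is what makes this work, and this is the only place in the argument where the specific structure of $\xi(t)$ (in particular its inclusion of the average $\frac{1}{h}\int_{t-h}^t x(s)\, ds$) is used.
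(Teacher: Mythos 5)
Your proof is correct, but note that the paper does not actually prove Lemma~\ref{sec:wirtinger}: it is recalled as a preliminary result and attributed to the reference \cite{wirtinger}, so there is no in-paper argument to compare against line by line. Your Bessel--Legendre projection argument is a clean, self-contained derivation: the orthogonality relations for $p_0(s)=1$ and $p_1(s)=1-2(t-s)/h$ are right ($\int p_0 p_1=0$, $\|p_0\|^2=h$, $\|p_1\|^2=h/3$), the integration by parts does yield $x(t)+x(t-h)-\tfrac{2}{h}\int_{t-h}^t x(s)\,ds$, i.e.\ exactly the second block row of $F_2\xi(t)$, and Bessel's inequality then gives $\tfrac{1}{h}(F_2\xi)_1^{\top}R(F_2\xi)_1+\tfrac{3}{h}(F_2\xi)_2^{\top}R(F_2\xi)_2$ as claimed (you dropped a subscript on the first factor of the first term in your display, but the computation is evidently the intended one). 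For context: the original derivation in the cited reference obtains the bound by applying the classical Wirtinger inequality to an auxiliary function built from $x$ and combining it with Jensen's inequality, whereas your route is the projection onto the first two Legendre polynomials that was later recognized as the ``order-one Bessel--Legendre'' view; the latter is arguably more transparent and generalizes immediately to higher-order polynomial bases. It is also entirely in the spirit of this paper, whose Appendix~A proves its own ``Bessel-like inequality'' by precisely the same projection-of-the-error trick, just with a different weight and a one-dimensional projection space.
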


The second lemma, called Finsler's lemma, is widely used to cope with non linearities in LMIs.
\begin{lemma} (\cite{Svariable}) \\
	For any $Q \in \mathbb{S}^n$ and $M \in \mathbb{R}^{p \times n}$, the three following properties are equivalent:
	\begin{enumerate}	
		\item $x^{\top} Q x \prec 0$ for all $x \in \mathbb{R}^n \text{ such that } Mx = 0$,
		\item $\exists Y \in \mathbb{R}^{n \times p}, Q + \text{He} \left( M^{\top} Y \right) \prec 0$,
		\item ${M^{\perp}}^{\top} Q M^{\perp} \prec 0$ where $MM^{\perp} = 0$.
	\end{enumerate}
	\label{sec:finsler}
\end{lemma}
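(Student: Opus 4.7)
The plan is to prove the three-way equivalence in the order $(3)\Leftrightarrow(1)$, $(2)\Rightarrow(1)$, and finally $(1)\Rightarrow(2)$, with the last being the only nontrivial direction.

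For $(3)\Leftrightarrow(1)$, I would note that by the stated sign condition $MM^{\perp}=0$ together with the implicit full-rank assumption on $M^{\perp}$, the columns of $M^{\perp}$ form a basis of $\ker M$. Hence $Mx=0$ holds iff $x=M^{\perp}z$ for some $z$, and the substitution $x^{\top}Qx = z^{\top}{M^{\perp}}^{\top}QM^{\perp}z$ shows that (1) and (3) are just two descriptions of the same sign condition (using that $M^{\perp}$ has trivial kernel). The implication $(2)\Rightarrow(1)$ is then a one-liner: for any $x$ with $Mx=0$, the cross term $x^{\top}\mathrm{He}(M^{\top}Y)x$ vanishes, so the inequality in (2) evaluated on $\ker M$ collapses to the inequality in (1).

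The heart of the lemma is $(1)\Rightarrow(2)$. My strategy is to prove first the scalar statement that there exists $\lambda>0$ such that $Q-\lambda M^{\top}M\prec 0$ on all of $\mathbb{R}^n$. Once this is available, the choice $Y=-\tfrac{\lambda}{2}M$ gives $\mathrm{He}(M^{\top}Y)=-\lambda M^{\top}M$, which is exactly the form required in (2). To establish the scalar inequality, I would decompose any vector as $x=x_1+x_2$ with $x_1\in\ker M$ and $x_2\in(\ker M)^{\perp}=\mathrm{range}(M^{\top})$. On the unit sphere of $\ker M$ the form $x_1^{\top}Qx_1$ is bounded above by a strictly negative constant, by hypothesis and compactness, while on the unit sphere of $(\ker M)^{\perp}$ the form $x_2^{\top}M^{\top}Mx_2$ is bounded below by a strictly positive constant. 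A Young-type splitting of the cross term $2x_1^{\top}Qx_2$ then shows that, provided $\lambda$ is taken large enough, $x^{\top}(Q-\lambda M^{\top}M)x<0$ holds uniformly over the unit sphere of $\mathbb{R}^n$, yielding the desired strict negative definiteness.

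The main obstacle is precisely this compactness-and-dominance argument used to construct $\lambda$; the two easy equivalences are truly easy, but the non-constructive jump from a quadratic inequality restricted to a subspace (1) to a perturbed quadratic inequality on the full space (2) is what gives Finsler's lemma its content. All other steps reduce to a change of variables or a direct algebraic cancellation.
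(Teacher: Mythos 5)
The paper does not actually prove this lemma: it is recalled as a preliminary result and attributed to the cited reference, so there is no in-paper argument to compare yours against. Judged on its own, your proposal is the classical proof of Finsler's lemma and it is correct. The equivalence of (1) and (3) via the basis property of $M^{\perp}$, and the one-line verification of $(2)\Rightarrow(1)$ from the vanishing of $x^{\top}M^{\top}Yx=(Mx)^{\top}Yx$ on $\ker M$, are exactly as they should be; and the reduction of $(1)\Rightarrow(2)$ to producing a scalar $\lambda>0$ with $Q-\lambda M^{\top}M\prec 0$, followed by the choice $Y=-\tfrac{\lambda}{2}M$, is the standard route, with the compactness-plus-Young dominance argument carrying the real content, as you say. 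Two small points to tidy up when writing it in full: in the estimate over the unit sphere you must absorb not only the cross term $2x_1^{\top}Qx_2$ but also the term $x_2^{\top}Qx_2$ into the $-\lambda\,x_2^{\top}M^{\top}Mx_2$ contribution (it is bounded by $\lVert Q\rVert\,\lVert x_2\rVert^2$, so the same choice of large $\lambda$ handles it); and the statement must be read with the usual implicit conventions that (1) quantifies over nonzero $x\in\ker M$ and that $M^{\perp}$ is a full-column-rank basis of $\ker M$, which you already flagged. Neither affects the validity of the argument.
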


\section{Exponential Stability}

Considering a feedback on System \eqref{eq:sys}, i.e. $u(t) = Ky(t)$, it is possible to transform our system into a more general one:
\begin{equation}
	\left\{
		\begin{array}{ll}
			\displaystyle \dot{x}(t) = Ax(t) + A_d x_t(-h) + A_D\int_{-h}^0 x_t(s) ds, & \forall t \supeq 0, \\
			x(t) = \phi(t), \hfill \forall t & \in [-h, 0],
		\end{array}
	\right.
	\label{eq:sysDelayed}
\end{equation}
with $x(t) \in \mathbb{R}^n$ the instantaneous state vector and matrices $A$, $A_d$ and $A_D$ of appropriate dimensions. 

Based on the lemmas recalled above, we propose a first exponential stability result for the previous system.
\begin{theo}
Assume that, for given $h > 0$ and $\alpha \supeq 0$, there exist matrices $P \in \mathbb{S}^{2n}$,  $R, S \in \mathbb{S}^n_+$ and $Y \in \mathbb{R}^{n \times 4n}$ and a positive real $\beta_1$ such that the following LMIs are satisfied:
	\begin{equation}
		\begin{array}{l}
			P + \tfrac{e^{-2 \alpha h}}{h} \text{diag}(0_n, S) + \tfrac{4 \alpha^2 h}{e^{2 \alpha h}-2h\alpha-1} \left[ \begin{smallmatrix} h^2R & -hR \\ -hR & R \end{smallmatrix} \right] \ \ \ \ \ \ \\
			 \hfill -  \beta_1 \text{diag}\left( I_n, 0_n \right) \succ 0, 
		\end{array}
		\label{eq:positivity}
	\end{equation}
	\begin{equation}	
		\Phi(\alpha, h) + \text{He}\left(  F_4^{\top} Y \right) \prec 0,
	\end{equation}
	with
	\begin{equation*}
		\begin{array}{rcl}
			\Phi(\alpha, h) & = & \text{He}\left( F_1^{\top} P (F_0 + \alpha F_1) \right) + \bar{S} + h^2 F_3^{\top} R F_3 \\
			& &  - e^{-2 \alpha h} F_2^{\top} \tilde{R} F_2 , \\
		\end{array}
	\end{equation*}
	\begin{equation*}
	\begin{array}{ll}
		F_0 = \left[ \begin{matrix}0_n & 0_n & I_n & 0_n \\ I_n & -I_n & 0_n & 0_n \end{matrix} \right], &
		F_1(h) = \left[ \begin{matrix} I_n & 0_n & 0_n & 0_n \\ 0_n & 0_n & 0_n & hI_n \end{matrix} \right], \\
		F_2 = \left[ \begin{matrix} I_n & -I_n & 0_n & 0_n \\ I_n & I_n & 0_n & -2I_n \end{matrix} \right] ,
		& 
		\begin{array}{l}
			F_3 = \left[ \begin{matrix} 0_n & 0_n & I_n & 0_n \end{matrix} \right],\\
			F_4 = \left[ \begin{matrix} A & A_d & -I_n & hA_D \end{matrix} \right],
		\end{array} \\
		\tilde{R} = \text{diag} \left( R, 3R \right), \ & \ \bar{S} = \text{diag} \left( S, - e^{-2 \alpha h} S, 0_{2n}\right),
	\end{array}
	\end{equation*}
then, time-delay system \eqref{eq:sysDelayed} is $\alpha$-exponentially stable i.e.:
\begin{equation*}
	|| x(t) || \infeq \sqrt{\beta_2 \beta_1^{-1}}  e^{-\alpha t} ||\phi ||_W,
\end{equation*}
where $\beta_2 = \displaystyle (1 +  h^2) \lambda_{max}(P) +  h \lambda_{max}(S) + \frac{h^3}{2} \lambda_{max}(R)$.
\label{sec:thm}
\end{theo}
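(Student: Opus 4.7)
The plan is to build a Lyapunov--Krasovskii functional that embeds the decay rate $\alpha$ in its exponential weights, to extract the lower bound $V\supeq\beta_1|x(t)|^2$ from \eqref{eq:positivity}, and to derive $\dot V+2\alpha V\infeq 0$ from the second LMI after absorbing the dynamical constraint via Finsler's lemma. Concretely, I would take
\begin{align*}
V(x_t,\dot x_t)&=\chi^\top(t)P\chi(t)+\int_{t-h}^{t}e^{-2\alpha(t-s)}x^\top(s)Sx(s)\,ds\\
&\quad+h\int_{-h}^{0}\!\int_{t+\theta}^{t}e^{-2\alpha(t-s)}\dot x^\top(s)R\dot x(s)\,ds\,d\theta,
\end{align*}
with $\chi(t)=[x^\top(t)\ \int_{t-h}^t x^\top(s)\,ds]^\top$ and augmented state $\xi(t)=[x^\top(t)\ x^\top(t-h)\ \dot x^\top(t)\ \tfrac{1}{h}\int_{t-h}^t x^\top(s)\,ds]^\top$, for which $F_1\xi=\chi$, $F_0\xi=\dot\chi$, and \eqref{eq:sysDelayed} reads exactly $F_4\xi=0$.

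\textbf{Lower bound.} Jensen's inequality gives $\int_{t-h}^t e^{-2\alpha(t-s)}x^\top Sx\,ds\supeq\tfrac{e^{-2\alpha h}}{h}\chi^\top\text{diag}(0_n,S)\chi$. For the triple integral $V_R$, swapping the order of integration yields $V_R=h\int_{t-h}^t(s-t+h)e^{-2\alpha(t-s)}\dot x^\top R\dot x\,ds$, and applying the Cauchy--Schwarz inequality with weight $\psi(s)=(s-t+h)e^{-2\alpha(t-s)}$ to the identity $hx(t)-\int_{t-h}^t x(s)\,ds=\int_{t-h}^t(s-t+h)\dot x(s)\,ds$ produces, via the explicit evaluation $\int_{t-h}^t(s-t+h)e^{2\alpha(t-s)}\,ds=\tfrac{e^{2\alpha h}-2\alpha h-1}{4\alpha^2}$, the bound $V_R\supeq\tfrac{4\alpha^2 h}{e^{2\alpha h}-2\alpha h-1}\chi^\top\left[\begin{smallmatrix}h^2R & -hR\\ -hR & R\end{smallmatrix}\right]\chi$. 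These two bounds, combined with \eqref{eq:positivity}, give $V\supeq\beta_1|x(t)|^2$. Choosing the Cauchy--Schwarz weight so as to reproduce exactly the coefficient appearing in \eqref{eq:positivity} (which tends to $2/h$ as $\alpha\to 0$) is the main technical point of the proof.

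\textbf{Derivative, Finsler, and conclusion.} A term-wise differentiation gives $\dot V+2\alpha V=\xi^\top\big[\text{He}(F_1^\top P(F_0+\alpha F_1))+\bar S+h^2 F_3^\top R F_3\big]\xi-h\int_{t-h}^t e^{-2\alpha(t-s)}\dot x^\top R\dot x\,ds$. Using $e^{-2\alpha(t-s)}\supeq e^{-2\alpha h}$ on $[t-h,t]$ and then Lemma~\ref{sec:wirtinger} bounds the integral from below by $e^{-2\alpha h}\xi^\top F_2^\top\tilde R F_2\xi$, so $\dot V+2\alpha V\infeq\xi^\top\Phi(\alpha,h)\xi$. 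Since trajectories satisfy $F_4\xi=0$, Lemma~\ref{sec:finsler} converts the sign condition on $\ker F_4$ into the second LMI of the theorem with multiplier $Y$, and integrating $\dot V+2\alpha V\infeq 0$ yields $V(t)\infeq e^{-2\alpha t}V(0)$. Since $|x(0)|\infeq\|\phi\|_W$, $|\int_{-h}^0\phi\,ds|\infeq h\|\phi\|_W$, and similarly for the $S$- and $R$-integrals at $t=0$, a direct term-wise bound recovers $V(0)\infeq\beta_2\|\phi\|_W^2$ with $\beta_2$ as announced. Combining with the lower bound yields the estimate $|x(t)|\infeq\sqrt{\beta_2/\beta_1}\,e^{-\alpha t}\|\phi\|_W$.
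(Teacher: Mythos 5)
Your proof is correct and follows essentially the same route as the paper: the same Lyapunov--Krasovskii functional, the same estimate $\dot V + 2\alpha V \infeq \xi^{\top}\Phi(\alpha,h)\xi$ obtained via the Wirtinger-based inequality and Finsler's lemma, and the same constants $\beta_1$ and $\beta_2$. The only cosmetic difference is in the lower bound on the triple-integral $R$-term: you reorder it into a single integral with weight $(s-t+h)e^{-2\alpha(t-s)}$ and apply weighted Cauchy--Schwarz, whereas the paper derives the identical inequality as a ``Bessel-like'' projection estimate in its appendix --- the two arguments are equivalent.
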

\begin{proof}
This proof is divided into two parts. 
	
	{\em Part 1: Stability of system \eqref{eq:sysDelayed}} \
	Consider a slightly modified Lyapunov-Krasovskii functional originally proposed by \cite{wirtinger,mondie2005exponential}:
	\begin{equation}
		\begin{array}{lcl}
			 V(x_t, \dot{x}_t)  & = & \displaystyle \bar{x}^{\top}(t) P \bar{x}(t) + \int_{t-h}^t e^{-2\alpha(t-s)} x^{\top}(s) S x(s) ds \ \ \\
			 & & \hfill \displaystyle + h\int_{t-h}^t \int_{\theta}^t e^{-2\alpha(t-s)} \dot{x}^{\top}(s) R \dot{x}(s) ds d\theta,
		\end{array}
		\label{eq:lyap2}
	\end{equation}
	with the extended state $\bar{x}(t) = \displaystyle \left[ x^{\top}(t) \ \ \int_{t-h}^t x^{\top}(s) ds \right]^{\top}$. 
	

	Let us firstly introduce functional $W_{\alpha}$ given by:
	\[
		W_{\alpha}(x_t, \dot{x}_t) = \dot{V}(x_t, \dot{x}_t) + 2 \alpha V(x_t, \dot{x}_t)
	\]
	We want to find an LMI condition so that inequality:
	\begin{equation}
		W_{\alpha}(x_t, \dot{x}_t) < 0,
		\label{eq:newLMI}
	\end{equation}
	is guaranteed for system \eqref{eq:sysDelayed}.

The derivative of functional \eqref{eq:lyap2} along the trajectories of time-delay system \eqref{eq:sysDelayed} leads to:
%
\begin{equation}
	\begin{array}{lcl}
		 W_{\alpha}(x_t, \dot{x}_t) & \infeq &\displaystyle \dot{\bar{x}}^{\top}(t) P \bar{x}(t) + \bar{x}^{\top}(t) P \dot{\bar{x}}(t) + 2 \alpha \bar{x}^{\top}(t) P \bar{x}(t) \\
		 & &\displaystyle + x^{\top}(t) S x(t) - e^{-2 \alpha h} x^{\top}(t-h) S x(t-h) \\
		 & &\displaystyle + h^2 \dot{x}^{\top}(t) R \dot{x}(t) - h e^{-2\alpha h} \int_{t-h}^t  \hspace{-0.2cm} \dot{x}^{\top}(s) R \dot{x}(s) ds
	\end{array}
	\label{eq:inequality1}
\end{equation}

Using the extended state variable
\[
	\xi(t) = \left[ \begin{array}{cccc}\displaystyle x^{\top}(t) \ \ \ x^{\top}(t-h) \ \ \ \dot{x}^{\top}(t) \ \ \ \frac{1}{h} \int_{t-h}^t x^{\top}(s) ds \end{array}\right]^{\top},
\]
and the matrices defined in this theorem, inequality \eqref{eq:inequality1} can be rewritten as:
\begin{equation*}
	\begin{array}{ll}
		W_{\alpha}(x_t, \dot{x}_t) \infeq & \displaystyle\xi^{\top}(t) \left[ \text{He}\left( F_1^{\top} P ( F_0 + \alpha F_1\right)  + \bar{S} \right. \\
		& \displaystyle \left. +h^2 F_3^{\top} R F_3 \right] \xi(t)  - h e^{-2\alpha h} \hspace{-0.1cm}\int_{t-h}^t \hspace{-0.2cm} \dot{x}^{\top}(s) R \dot{x}(s) ds.
	\end{array}
	\label{eq:finsler1}
\end{equation*}

Then, using the integral inequality from Lemma \ref{sec:wirtinger}, we obtain:
\begin{equation*}
	\begin{array}{ll}
		W_{\alpha}(x_t, \dot{x}_t) \infeq & \displaystyle \xi^{\top}(t) \left[ \text{He}\left( F_1^{\top} P (F_0 + \alpha F_1) \right)  + \bar{S}  \right. \\
		& \displaystyle \left. + h^2 F_3^{\top} R F_3 - e^{-2 \alpha h} F_2^{\top} \tilde{R} F_2 \right] \xi(t),
	\end{array}
	\label{eq:finsler2}
\end{equation*}
where $\xi$ satisfies a linear constraint defined by $F_4 \xi = 0$. Therefore, using Lemma \ref{sec:finsler}, $\xi^{\top} \Phi(\alpha, h) \xi \infeq 0$ with $F_4 \xi = 0$, inequality \eqref{eq:newLMI} is satisfied if the following LMI is also satisfied:
\begin{equation}
	\displaystyle \exists Y \in \mathbb{R}^{n \times 4n}, \Phi(\alpha, h) + \text{He}\left( F_4^{\top} Y \right) \prec 0,
	\label{eq:finsler3}
\end{equation}
which concludes the first part of the proof.

{\em Part 2 Exponential stability}: The proof of exponential stability is based on inequality \eqref{eq:finsler3}. Indeed, as it has been noticed by \cite{mondie2005exponential}, the inequality \eqref{eq:finsler3} leads to:
\begin{equation}
	 V(x_t, \dot{x}_t) \infeq e^{-2\alpha t} V(\phi, \dot{\phi}),
	 \label{eq:expConv}
\end{equation}

To ensure the exponential stability of system \eqref{eq:sysDelayed}, one should find strictly positive reals $\beta_1$ and $\beta_2$ such that:
\begin{equation}
	\beta_1 || x(t) ||^2 \infeq V(x_t, \dot{x}_t) \infeq \beta_2 ||x_t||^2_W
	\label{eq:lyapIneq}
\end{equation}

A lower bound for equation \eqref{eq:lyap2} can be derived using Jensen inequality and the inequality derived in appendix \ref{sec:app1}. The Bessel-like inequality developed in appendix \ref{sec:app1} is similar to Jensen's inequality but deals with the exponential terms.
\begin{equation*}
	\begin{array}{lcl}
		V(x_t, \dot{x}_t) & \supeq & \displaystyle \bar{x}^{\top}(t) P \bar{x}(t) \\
		& & + h \int_{t-h}^t \int_{\theta}^t e^{-2\alpha(t-s)} \dot{x}^{\top}(s) R \dot{x}(s) ds d\theta\\
		& & \displaystyle + \frac{e^{-2 \alpha h}}{h} \left( \int_{t-h}^t x^{\top}(s) ds \right) S \left( \int_{t-h}^t x(s) ds \right).\\
	\end{array}
 \end{equation*}
 
 Then, by Jensen's inequality, we have:
 \begin{equation*}
 	\begin{array}{lcl}
 		V(x_t, \dot{x}_t) & \displaystyle \supeq & \bar{x}^{\top}(t) \left( P + \frac{e^{-2 \alpha h}}{h} \text{diag}(0, S) \right. \\
 		& & \hfill \displaystyle + \tfrac{4 \alpha^2 h}{e^{2 \alpha h}-2h\alpha-1} \left[ \begin{smallmatrix} h^2R & -hR \\ -hR & R \end{smallmatrix} \right]  \\
 		& & \hfill \left. -  \beta_1 \text{diag}\left( I_n, 0_n \right) \vphantom{\frac{e^{-2 \alpha h}}{h}} \right) \bar{x}^{\top} + \beta_1 || x(t) ||^2. \\
	\end{array}
 \end{equation*}
 
 Assuming LMI \eqref{eq:positivity} holds, then the previous equation becomes:
 \begin{equation}
 	V(x_t, \dot{x}_t) \supeq \beta_1 || x(t) ||^2.
 \end{equation}

Using equation \eqref{eq:expConv} and \eqref{eq:lyapIneq}, one can get:
\begin{equation*}
	\beta_1 || x(t) ||^2 \infeq V(x_t, \dot{x}_t) \infeq e^{-2\alpha t} V(\phi, \dot{\phi}).
\end{equation*}

Calculating $V(\phi, \dot{\phi})$, one can get the following upper bound: 
\begin{equation*}
	\begin{array}{lcl}
		 V(\phi, \dot{\phi})  & = & \displaystyle \bar{\phi}^{\top}(0) P \bar{\phi}(0) + \int_{-h}^0 e^{2 \alpha s} \phi^{\top}(s) S \phi(s) ds \\
		 & & \displaystyle + h \int_{-h}^0 \int_{\theta}^0 e^{2 \alpha s} \dot{\phi}^{\top}(s) R \dot{\phi}(s) ds d\theta, \\
	\end{array}
\end{equation*}
with $\bar{\phi}(0) = \displaystyle \left[ \phi^{\top}(0) \ \ \int_{-h}^0 \phi^{\top}(s) ds \right]^{\top}$. We get:
\begin{equation*}
	\begin{array}{ccl}
		 V(\phi, \dot{\phi})  & \infeq & \displaystyle \left( (1 + h^2) \lambda_{max}(P) +  h \lambda_{max}(S) \right) ||\phi||_h^2 \\
		 &&+ \frac{h^3}{2} \lambda_{max}(R) ||\dot{\phi}||_h^2\\
		 & \infeq & \displaystyle \beta_2 ||\phi ||^2_W, \\
	\end{array}
\end{equation*}
with 
\[
	\beta_2 = \displaystyle (1+ h^2) \lambda_{max}(P) +  h \lambda_{max}(S) + \frac{h^3}{2} \lambda_{max}(R).
\]

Using the previous equation and \eqref{eq:lyapIneq}, one can get:
\begin{equation*}
	\beta_1 || x(t) ||^2 \infeq V(x_t, \dot{x}_t) \infeq e^{-2\alpha t} V(\phi, \dot{\phi}) \infeq \beta_2 e^{-2\alpha t} ||\phi ||^2_W
\end{equation*}
which is the same than:
\begin{equation*}
	|| x(t) || \infeq \underbrace{\sqrt{\beta_2 \beta_1^{-1}}}_{\gamma}  e^{-\alpha t} ||\phi ||_W,
\end{equation*}
and that concludes the proof.
\end{proof}

\begin{remark}
	The lower bound $\beta_1$ has been explicitly stated such that an optimization of $\gamma$ should be possible. 
 \end{remark}
 \begin{remark}
 	By fixing $\alpha = 0$, one can recover the case of asymptotic stability developed by \cite{wirtinger}.
\end{remark}


\begin{remark}
At the light of Lemma \ref{sec:finsler} proposition 3, using slack variables is not mandatory and is useless for analysis purposes. Nevertheless, we will show that it is suitable for design purposes.
\end{remark}

\begin{coro}
	Assume that, for given $h > 0$ and $\varepsilon_1, \varepsilon_2, \varepsilon_3, \varepsilon_4$ in $\mathbb{R}$, $\alpha \supeq 0$, there exist matrices $P \in \mathbb{S}^{2n}$,  $R, S \in \mathbb{S}^n_+$ and $Z \in \mathbb{R}^{n \times n}$ and a positive real $\beta_1$ such that the positivity LMI \eqref{eq:positivity} and the following LMI are satisfied:
	\begin{equation}
		\Phi(\alpha, h) + \text{He}\left(  F_4^{\top} Z F_{\varepsilon} \right) \prec 0, \\
		\label{eq:LMIcor}
	\end{equation}
	with
	\begin{equation*}
		F_{\varepsilon} = \left[ \begin{matrix} \varepsilon_1 I_n & \varepsilon_2 I_n & \varepsilon_3 I_n & \varepsilon_4 I_n \end{matrix} \right],
		\label{eq:cor}
	\end{equation*}
	Then system \eqref{eq:sysDelayed} is $\alpha$-stable and $Z$ is not singular.
	\label{sec:cor}
\end{coro}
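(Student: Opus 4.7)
The plan is to read this corollary as a direct specialization of Theorem~1 plus a short block-extraction argument for the invertibility of $Z$. The slack matrix $Y \in \mathbb{R}^{n\times 4n}$ of Theorem~1 is replaced by the structured product $Y := Z F_\varepsilon$, where $Z \in \mathbb{R}^{n\times n}$ and $F_\varepsilon$ is the fixed $n\times 4n$ row block given in the statement. Since $Z F_\varepsilon$ still lies in $\mathbb{R}^{n\times 4n}$, inequality \eqref{eq:LMIcor} is exactly the second LMI of Theorem~1 evaluated at this particular $Y$, and the positivity condition \eqref{eq:positivity} is unchanged. Hence Theorem~1 directly yields the $\alpha$-exponential stability of \eqref{eq:sysDelayed}.

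The remaining claim --- that $Z$ must be non-singular --- is where the structure of $F_\varepsilon$ pays off. The strategy is to extract the $(3,3)$ block of the $4\times 4$ block matrix in \eqref{eq:LMIcor}, because this block turns out to see $Z$ directly. Indeed, the third block-row of $F_4^{\top}$ is $-I_n$ and the third block-column of $F_\varepsilon$ is $\varepsilon_3 I_n$, so the $(3,3)$ block of $\mathrm{He}(F_4^{\top} Z F_\varepsilon)$ is $-\varepsilon_3\,\mathrm{He}(Z)$. For $\Phi(\alpha,h)$, inspection of $F_0$, $F_1$, $F_2$ and $\bar S$ shows that their third block-rows/columns all vanish, so only the term $h^2 F_3^{\top} R F_3$ contributes, giving $h^2 R$ in the $(3,3)$ block. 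Since the full LMI is strictly negative definite, so is every diagonal block, whence
\begin{equation*}
h^2 R - \varepsilon_3\,\mathrm{He}(Z) \prec 0, \qquad \text{i.e.} \qquad \varepsilon_3\,\mathrm{He}(Z) \succ h^2 R \succ 0.
\end{equation*}

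This last inequality does the rest. First, it forces $\varepsilon_3 \neq 0$, since $R \succ 0$. Second, for every $v \neq 0$ it yields $2\varepsilon_3\, v^{\top} Z v > 0$, so $v^{\top} Z v \neq 0$. If $Z$ were singular, there would exist $w\neq 0$ with $Zw = 0$, and then $w^{\top} Z w = 0$ would contradict the previous line. Hence $Z$ is non-singular, which completes the proof.

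I do not anticipate any real obstacle: the corollary is ``corollary'' in the strict sense, and the only non-mechanical step is noticing that the $(3,3)$-block of the Lyapunov LMI is precisely the one that exposes $Z$, which is exactly why the structured slack $Z F_\varepsilon$ is useful for design (as announced in the preceding remark).
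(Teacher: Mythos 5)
Your proposal is correct and follows essentially the same route as the paper: specialize Theorem~\ref{sec:thm} to $Y=ZF_{\varepsilon}$ for the stability claim, then read off the $(3,3)$ block of the LMI to get $h^2R-\varepsilon_3\,\text{He}(Z)\prec 0$ and hence $\varepsilon_3\neq 0$ and $Z$ invertible (the paper states this block condition as $-\varepsilon_3(Z^{\top}+Z)\prec 0$ without the $h^2R$ term, so your version is in fact slightly sharper and more carefully justified). One cosmetic slip: the third block column of $F_0$ is not zero; the $(3,3)$ block of $\text{He}\left(F_1^{\top}P(F_0+\alpha F_1)\right)$ vanishes because the third block column of $F_1$ is zero, which is all you need.
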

\begin{proof}
	Applying Theorem \ref{sec:thm}, with $Y = ZF_{\varepsilon}$ leads to this result. LMI \eqref{eq:LMIcor} leads to the result $-\varepsilon_3(Z^{\top} + Z) \prec 0$ which means $\varepsilon_3 \neq 0$ and $Z$ is not singular. This proof is constraining $Y$ so this is not equivalent to the previous theorem. The Finsler's lemma can be seen as assessing the stability of two systems at the same time. Considering $Y = ZF_{\varepsilon}$, and by applying Finsler's lemma on equation \eqref{eq:finsler3} with $F_4$ the vector of slack variables, that leads to the stability of another system:
	\[
		\varepsilon_3 \dot{x}(t) = -\varepsilon_1 x(t) - \varepsilon_2 x(t-h) - \varepsilon_4 \frac{1}{h} \int_{-h}^0 x(t+s) dx
	\]
	There are then two possible choices for $F_{\varepsilon}$:
		\begin{enumerate}
			\item $F_{\varepsilon \not = 1}$ is $\varepsilon_3 = \varepsilon_1 = \varepsilon_4 = 1$ and $\varepsilon_2 = 0$
			\item$F_{\varepsilon = 1}$ is $\varepsilon_3 = \varepsilon_1 = \varepsilon_4 = \varepsilon_2 = 1$
		\end{enumerate}
	The first choice sees the delayed term $x(t-h)$ as a perturbation. Perhaps, deleting the effect of this term would stabilize the system, that means $\varepsilon_2 = 0$. The other choice considers that the delayed term is helping the stabilization of the system. The two choices are confronted in numerical simulations later on. 
\end{proof}

\section{Control Design}

In this part, the problem of designing a controller for time-delay system \eqref{eq:sys} is discussed, i.e. the controller gain $K$becomes a variable of the LMI. Theorem \ref{sec:thm} would lead to a non-linear matrix inequality while Corollary \ref{sec:cor} gets rid of this at the price of a higher constraint on the structure of the slack variables.

Considering the average of the whole state $X$ as the output ($C = I_{n}$), the system can be written in another more useful form with :
\begin{equation}
	\left\{
		\begin{array}{ll}
			\displaystyle \dot{x}(t) = Ax(t) + \frac{1}{h} B K \int_{t-h}^t x(s) ds, & \ \ \forall t \supeq 0, \\
			x(t) = \phi(t), & \ \ \forall t \in [-h; 0],
		\end{array}
	\right.
	\label{eq:sys2}
\end{equation}
where $\phi$ is the initial condition and $x$ is the state.

The system is in the same form as the one defined in \eqref{eq:sysDelayed} with $A_D = \frac{1}{h}BK$ and $A_d = 0$. One can notice that $A_D$ depends on $K$ which is a variable in this case. The optimization based on the LMI framework cannot be applied directly because it is not a linear problem on the variable $K$. The feedback gain $K$ for a given $h$ can be found using this theorem:

\begin{theo}
	\label{sec:thmFeedback}
	Assume that, for given $h > 0$, $\varepsilon_1, \varepsilon_2, \varepsilon_3, \varepsilon_4 \in \mathbb{R}$ and $\alpha \supeq 0$, there exist matrices $P \in \mathbb{S}^{2n}_+$,  $R, S \in \mathbb{S}^n_+$, $X \in \mathbb{R}^{n \times n}$ invertible and a positive real $\beta_1$ such that the positivity LMI \eqref{eq:positivity} and the following LMI are satisfied:
	\begin{equation}
			\Phi(\alpha, h) + \text{He}\left(  \left( N \tilde{X} +  \left[ \begin{array}{cccc} 0_n & 0_n & 0_n & B\bar{K} \end{array} \right] \right)^{\top} F_{\varepsilon}  \right) \prec 0, \\	
	\end{equation}
	with the same notations than in Corollary \ref{sec:cor} but:
	 \[
	 	\begin{array}{l}
	 		N = \left[ \begin{array}{cccc} A & 0_n & -I_n & 0_n \end{array} \right], \\
	 		\tilde{X} = \text{diag}(X,X,X,X), \\
		\end{array}
	 \]
then time-delay system \eqref{eq:sys2} is $\alpha$-stable with the feedback gain $K = \bar{K} X^{-1}$.
\end{theo}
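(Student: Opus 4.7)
The plan is to specialize Corollary~\ref{sec:cor} to the closed-loop system \eqref{eq:sys2} and then linearize the resulting matrix inequality by a congruence transformation combined with a change of variable. First I would observe that system \eqref{eq:sys2} is of the form \eqref{eq:sysDelayed} with $A_d = 0_n$ and $A_D = \frac{1}{h}BK$, so that the constraint matrix of the Corollary specializes to $F_4 = [A \ 0_n \ -I_n \ BK]$. The Finsler term $\text{He}(F_4^\top Z F_\varepsilon)$ is then bilinear in the unknowns $(Z,K)$ because of the product $BKZ$ appearing in the last block column, and this is the nonlinearity to be removed.

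To linearize, introduce $X = Z^{-1}$ (legitimate because the Corollary guarantees that $Z$ is invertible) together with the substitution $\bar{K} = KX$, and apply a congruence transformation with $\tilde{X} = \text{diag}(X,X,X,X)$. The central algebraic observation is that every basis matrix intertwines with $\tilde{X}$ according to $F_i \tilde{X} = \tilde{X}_{\#} F_i$, where $\tilde{X}_{\#}$ denotes either $X$ or $\text{diag}(X,X)$ acting from the left; this property holds for $F_0, F_1, F_2, F_3$ because each of their blocks is a scalar multiple of $I_n$. Consequently, after congruence each term of $\Phi(\alpha,h)$ keeps its shape, with the Lyapunov--Krasovskii matrices transmuted to $\text{diag}(X,X)^\top P \,\text{diag}(X,X)$, $X^\top R X$ and $X^\top S X$. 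Renaming these three transformed matrices $P,R,S$ in the new theorem reproduces exactly $\Phi(\alpha,h)$. For the slack-variable term, the same congruence gives $(F_4 \tilde{X})^\top (Z F_\varepsilon \tilde{X})$, and the key simplification $Z F_\varepsilon \tilde{X} = F_\varepsilon$ follows from $ZX = I_n$ together with the scalar structure of $F_\varepsilon$. On the other side, $F_4 \tilde{X} = [AX \ 0_n \ -X \ B\bar{K}] = N\tilde{X} + [0_n \ 0_n \ 0_n \ B\bar{K}]$, which is precisely the expression appearing in the theorem statement, so the bilinearity has been traded for the new unknowns $(X,\bar{K})$ that enter affinely.

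The hard part will be handling the positivity LMI \eqref{eq:positivity} under this change of variable. Because $\text{diag}(I_n, 0_n)$ does not transform as a scalar multiple of itself under congruence by $\text{diag}(X,X)$, the positivity condition demands separate care; the strengthening $P \in \mathbb{S}^{2n}_+$ (compared with $P \in \mathbb{S}^{2n}$ in the Corollary) is what allows one to absorb the transformation and recover positivity up to an appropriate rescaling of $\beta_1$. Once both the Finsler LMI and the positivity condition are validated in the transformed variables, Corollary~\ref{sec:cor} applies and yields $\alpha$-stability of \eqref{eq:sys2}, with the feedback gain reconstructed as $K = \bar{K} X^{-1}$, which is well defined by the invertibility of $X$.
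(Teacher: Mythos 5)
Your proposal follows essentially the same route as the paper's proof: specialize Corollary~\ref{sec:cor} to \eqref{eq:sys2} with $F_4 = [\,A \ \ 0_n \ \ {-I_n} \ \ BK\,]$, set $X = Z^{-1}$ and $\bar K = KX$, apply the congruence by $\tilde X$, and use the intertwining relations $F_0\tilde X=\bar XF_0$, $F_1\tilde X=\bar XF_1$, $F_2\tilde X=\bar XF_2$, $F_3\tilde X=XF_3$, $F_\varepsilon\tilde X=XF_\varepsilon$ to recover $\Phi(\alpha,h)$ in the transformed variables $P_2=\bar X^{\top}P\bar X$, $S_2=X^{\top}SX$, $R_3=X^{\top}RX$. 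Your additional care with the positivity LMI \eqref{eq:positivity} --- which indeed does not transform covariantly because of the $\beta_1\,\text{diag}(I_n,0_n)$ block --- addresses a point the paper's proof passes over silently, and your resolution of it is sound.
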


\begin{proof}
	Since $Z$ is non-singular in the proof of Corollary \ref{sec:cor}, let us introduce $X = Z^{-1}$ and $F_4 = N +  \left[ \begin{array}{cccc} 0_n & 0_n & 0_n & BK \end{array} \right]$ so that $F_4 \xi = 0$ is still valid.
	
	Multiplying on the left by $\tilde{X}^{\top}$ and on the right by $\tilde{X}$, equation \eqref{eq:LMIcor} is equivalent to the following one:
	 \begin{equation}
	 	\begin{array}{rl}
	 		\left( F_0 \tilde{X} \right)^{\top} P F_1(h) \tilde{X} + \left(F_1(h) \tilde{X} \right)^{\top} P F_0 \tilde{X} &\\
	 		+ 2 \left( \alpha F_1 \tilde{X} \right)^{\top} P F_1 \tilde{X} + \tilde{X}^{\top} \bar{S} \tilde{X} &\\
	 		 - e^{-2 \alpha h} \left( F_2 \tilde{X} \right)^{\top} \tilde{R} F_2 \tilde{X} + h^2 \left( F_3 \tilde{X} \right) ^{\top} R F_3 \tilde{X} & \\
	 		 + \text{He}\left( \tilde{X}^{\top} F_4^{\top} X^{-1} F_{\varepsilon} \tilde{X} \right) & \prec 0.
		\end{array}
		\label{eq:LMIlemma}
	 \end{equation}
	 
	 Noticing that $F_0 \tilde{X} = \bar{X} F_0$, $F_1 \tilde{X} =  \bar{X} F_1$, $F_3 \tilde{X} = X F_3$ and $F_{\varepsilon} \tilde{X} = X F_{\varepsilon}$ with $\bar{X} = \text{diag}(X,X)$, equation \eqref{eq:LMIlemma} becomes:
	 \begin{equation*}
	 	\begin{array}{rl}
	 		F_0^{\top} P_2 F_1(h) + F_1^{\top}(h) P_2 F_0 + 2 \alpha F_1^{\top} P_2 F_1 & \\
	 		- e^{-2 \alpha h} F_2 \tilde{R}_3  F_2  + h^2 F_3^{\top} R_3 F_3 &  \\
	 		+ \bar{S}_2  + \text{He}\left( \left( N \tilde{X} + \left[ \begin{array}{cccc} 0_n & 0_n & 0_n & B\bar{K} \end{array} \right] \right)^{\top} F_{\varepsilon} \right) & \prec 0,
	 	\end{array}
	\end{equation*}
	with $\bar{K} = K X$, $P_ 2= \bar{X}^{\top} P \bar{X} \succ 0$, $S_2 = X^{\top} S X$ and $R_3 = X^{\top} R X$. As $X$ is invertible, the positiveness of $P$ is equivalent to the positiveness of $P_2$ and that concludes the proof.
\end{proof}

%
%
%
%
		
\section{Observer-based Control}

Based on the preliminary section, we aim at developing an observer-based controller for time-delay system \eqref{eq:sys}.
Following the same procedure than the one described in \cite{glad2000control} for a linear time-invariant system, the estimate of $x$ will be called $\hat{x}$ and let $\varepsilon$ be $x - \hat{x}$ such that:
\begin{subequations}
\begin{align}
	\dot{\hat{x}} & = A \hat{x} + Bu + L \left( y - \frac{1}{h} C \int_{t-h}^t \hat{x}(s) ds \right) \label{eq:xHat}, \\
	\dot{\epsilon} & = A \epsilon - \frac{1}{h} L C \int_{t-h}^t \epsilon (s) ds \label{eq:obsStab},
\end{align}
\label{eq:observer}
\end{subequations}
with $L$ a $n \times p$ matrix and the others matrices are the same as before. The stability of system \eqref{eq:obsStab} leads to the convergence of $\hat{x}$ to $x$. This observer has the same structure than a Kalman filter for LTI systems but adapted to System \eqref{eq:sys}.

\subsection{Convergence of the observer}
The following theorem holds for the error system \eqref{eq:observer}:
\begin{theo} \label{th:observerStability}
	Assume that, for given $h > 0$, $\alpha \supeq 0$, $\varepsilon_1, \varepsilon_2, \varepsilon_3, \varepsilon_4 \in \mathbb{R}$, there exist a matrix $P \in \mathbb{S}^{2n}$ and $R, S \in \mathbb{S}^{2n}_+$ and a $n \times n$ invertible matrix $Z$, a $n \times p$ matrix denoted $\bar{L}$ and , $\beta_1 > 0$ such that LMI \eqref{eq:positivity} and the following LMI are satisfied:
	\begin{equation}
		\Phi(\alpha, h) + \text{He}\left( N^{\top} Z F_{\varepsilon} +  \left[ \begin{array}{cccc} 0_n & 0_n & 0_n & -\bar{L} C \end{array} \right]^{\top} F_{\varepsilon}  \right) \prec 0,
		\label{eq:lmiobsv}
	\end{equation}
	with the same notations than for Corollary \ref{sec:cor} and Theorem \ref{sec:thmFeedback} but:
	\[
	 	N = \left[ \begin{array}{cccc} A & 0_n & -I_n & 0_n \end{array} \right],
	\]
then time-delay system error $\varepsilon$ defined in \eqref{eq:obsStab} is $\alpha$-stable with the gain $L =  Z^{-\top} \bar{L}$. That means $\hat{x}$ in $\eqref{eq:xHat}$ converges exponentially to the instantaneous $x$. 
\end{theo}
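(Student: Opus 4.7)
The plan is to reduce this theorem to Corollary \ref{sec:cor} applied to the error dynamics \eqref{eq:obsStab}, via a change of variable that makes the unknown gain $L$ appear linearly in the LMI. The error system fits the general form \eqref{eq:sysDelayed} with $A_d = 0_n$ and $A_D = -\tfrac{1}{h} L C$, so the associated constraint matrix is
\[
F_4 = \left[ \begin{array}{cccc} A & 0_n & -I_n & -LC \end{array}\right] = N + \left[ \begin{array}{cccc} 0_n & 0_n & 0_n & -LC \end{array}\right],
\]
where $N$ is the matrix defined in the statement.

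First I would invoke Corollary \ref{sec:cor} with the slack-variable parameterization $Y = Z F_\varepsilon$. The corollary already guarantees that $Z$ is nonsingular (its proof extracts $-\varepsilon_3 (Z + Z^\top) \prec 0$), and delivers the stability inequality $\Phi(\alpha,h) + \text{He}(F_4^\top Z F_\varepsilon) \prec 0$. Splitting $F_4 = N + [0_n\ 0_n\ 0_n\ -LC]$ isolates the only bilinear piece, $[0_n\ 0_n\ 0_n\ -LC]^\top Z F_\varepsilon$, since $N^\top Z F_\varepsilon$ is already linear in the decision variables.

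The key step is the linearization. Because $L$ sits on the \emph{left} of $C$ (not on the right as $K$ sat on $B$ in Theorem \ref{sec:thmFeedback}), a one-sided substitution suffices: setting $\bar{L} = Z^\top L$, so that $L^\top Z = \bar{L}^\top$, a direct block computation yields
\[
\left[ \begin{array}{cccc} 0_n & 0_n & 0_n & -LC \end{array}\right]^\top Z = \left[ \begin{array}{cccc} 0_n & 0_n & 0_n & -\bar{L}C \end{array}\right]^\top,
\]
hence $\text{He}(F_4^\top Z F_\varepsilon) = \text{He}\bigl(N^\top Z F_\varepsilon + [0_n\ 0_n\ 0_n\ -\bar{L}C]^\top F_\varepsilon\bigr)$. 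This recovers \eqref{eq:lmiobsv} as a genuine LMI in the tuple $(P, R, S, Z, \bar{L}, \beta_1)$, with no congruence step needed — contrast this with the proof of Theorem \ref{sec:thmFeedback}, where pre- and post-multiplication by $\tilde{X}$ was required to handle the right factor $K$.

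Finally, since the positivity condition \eqref{eq:positivity} is carried over verbatim, Corollary \ref{sec:cor} applied to \eqref{eq:obsStab} gives $\alpha$-exponential stability of the error system, and the observer gain is recovered by $L = Z^{-\top} \bar{L}$, which is well-defined thanks to the nonsingularity of $Z$. I expect the only real place for error here to be bookkeeping around the transpose convention in $\bar{L} = Z^\top L$ versus $\bar{L} = Z L$: the former is what makes the bottom block of $[0\ 0\ 0\ -LC]^\top Z$ collapse to $-C^\top \bar{L}^\top$, and thus it must be chosen in that direction for the inequality \eqref{eq:lmiobsv} as written to be the correct linearization.
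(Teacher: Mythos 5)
Your proposal is correct and follows essentially the same route as the paper: identify the error dynamics with $F_4 = N + [\,0_n\ 0_n\ 0_n\ -LC\,]$, apply Corollary \ref{sec:cor} with $Y = ZF_\varepsilon$, and linearize via $\bar{L}^\top = L^\top Z$ (equivalently $\bar{L} = Z^\top L$), recovering $L = Z^{-\top}\bar{L}$ from the nonsingularity of $Z$. The paper's proof is just a terser version of the same argument; your explicit block computation and the observation that no congruence transformation is needed (unlike Theorem \ref{sec:thmFeedback}) are accurate.
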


\begin{proof}
	Starting from equation \eqref{eq:LMIcor} in Corollary \ref{sec:cor} with $F_4 = N + \left[ 0_{n, 3n}  \ \  -LC\right]$ so that $F_4 \xi = 0$ then:
	\[
		\Phi(\alpha, h) + \text{He}\left( \left( N + \left[ 0_{n, 3n} \ \  -LC\right] \right)^{\top} Z F_{\varepsilon} \right)  \prec 0
	 \]
	 which leads to LMI \eqref{eq:lmiobsv}
	 with $\bar{L}^{\top} = L^{\top} Z$ so $L = Z^{-T} \bar{L}$ which concludes the proof.
\end{proof}

\subsection{Feedback from reconstructed states}

Using equations \eqref{eq:observer} and $\hat{x} = x - \epsilon$, the original system can be transformed into:

\begin{equation}
	\left\{
	\begin{array}{l}
		\dot{x}(t) = (A-BK) x(t) + BK \epsilon(t), \\
		\displaystyle \dot{\epsilon}(t) = A \epsilon(t) - \frac{1}{h} LC \int_{t-h}^t \epsilon(s) ds, \\
		u(t) = -K\hat{x}(t).
	\end{array}
	\right.
	\label{eq:observerBased}
\end{equation}

Denoting $X^{\top}(t) = \left[ x^{\top}(t) \ \ \ \epsilon^{\top}(t) \right]^{\top}$ leads to:
\begin{equation}
	\dot{X}(t) = \left[ \begin{matrix} A-BK  & BK \\ 0_n & A \end{matrix} \right] X(t) + \left[ \begin{matrix} 0_n & 0_n \\ 0_n & -\frac{1}{h} LC \end{matrix} \right] \int_{t-h}^t X(s) ds.
	\label{eq:reconstructedFeedback}
\end{equation}

The following proposition gives a sufficient condition which ensures the stability of the closed loop \eqref{eq:observerBased}.

\begin{proposition}
\textbf{(Separation Principle)} The stability of the system using feedback from reconstructed states is ensured if the observer is stable and if there exists $K$ such that $A-BK$ has strictly negative eigenvalues.
\end{proposition}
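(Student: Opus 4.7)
The plan exploits the block upper-triangular structure of \eqref{eq:reconstructedFeedback}: the error dynamics for $\epsilon$ are entirely independent of $x$, so one can analyse the cascade in two stages. First, I would invoke Theorem \ref{th:observerStability} on the lower block row, which is exactly system \eqref{eq:obsStab}. The hypothesis that the observer is stable gives $|\epsilon(t)| \infeq \gamma_\epsilon\, e^{-\alpha t}\, \|\epsilon_0\|_W$ for some $\alpha \supeq 0$ and $\gamma_\epsilon \supeq 1$, so $\epsilon$ decays exponentially and is in particular integrable on $[0,\infty)$.

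Second, the $x$-equation reduces to the ordinary delay-free LTI system
\[
    \dot{x}(t) = (A-BK)\, x(t) + BK\, \epsilon(t),
\]
driven by the exponentially decaying input $\epsilon$. Since $A-BK$ has strictly negative eigenvalues, there exist constants $c,\mu > 0$ such that $\|e^{(A-BK)t}\| \infeq c\, e^{-\mu t}$. The variation of constants formula yields
\[
    x(t) = e^{(A-BK)t} x(0) + \int_0^t e^{(A-BK)(t-s)} BK\, \epsilon(s)\, ds,
\]
and a routine bound on this convolution delivers $\|x(t)\| \infeq \gamma_x\, e^{-\alpha' t}\bigl(\|x(0)\| + \|\epsilon_0\|_W\bigr)$ for any $\alpha' < \min(\alpha,\mu)$ (exactly $\min(\alpha,\mu)$ when the two rates differ; when they coincide one picks up an $t\, e^{-\mu t}$ factor, which is dominated by any slightly slower exponential).

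Combining the two estimates gives an exponential bound on $X = [x^\top\ \epsilon^\top]^\top$ in the sense of Definition 1, which is the desired conclusion. The only real subtlety is the interaction of the two exponential rates inside the convolution, but this is a textbook input-to-state argument and no new LMI reasoning is required. The significance of the proposition is structural: exactly as in the classical delay-free LTI case, the feedback gain $K$ and the observer gain $L$ can be synthesised independently via Theorem \ref{sec:thmFeedback} and Theorem \ref{th:observerStability}, respectively, and then combined without losing stability of the closed loop.
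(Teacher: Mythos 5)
Your argument is correct, but it follows a genuinely different route from the paper. The paper works in the frequency domain: it writes down the characteristic matrix $\Delta(s)$ of the coupled system \eqref{eq:reconstructedFeedback}, observes that its block upper-triangular structure makes the characteristic equation factor as $\det(sI_n-A+BK)\,\det\bigl(sI_n-A+\tfrac{1-e^{-hs}}{hs}LC\bigr)=0$, and then invokes Theorem~1.5 of \cite{opac-b1100602} to conclude that all characteristic roots lie in the open left half-plane, hence stability. You instead exploit the same triangular structure in the time domain: the $\epsilon$-subsystem is autonomous and exponentially decaying by hypothesis, and the $x$-equation is a delay-free LTI system driven by the exponentially decaying input $BK\epsilon$, so variation of constants and a standard convolution bound finish the job. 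The trade-off is instructive. The paper's route is shorter and delegates the delicate point (inferring stability of a distributed-delay system from the location of its characteristic roots) to a cited theorem, but it concludes only qualitative stability. Your route is self-contained, avoids the characteristic-root machinery entirely, and yields an explicit exponential estimate with decay rate $\min(\alpha,\mu)$ (modulo the $t\,e^{-\mu t}$ factor in the resonant case, which you correctly flag), which sits more naturally with the paper's Definition~1 of $\alpha$-stability. One cosmetic point: as in the paper's own statement, ``strictly negative eigenvalues'' should be read as ``eigenvalues with strictly negative real parts'' for your bound $\|e^{(A-BK)t}\|\infeq c\,e^{-\mu t}$ to hold.
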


\begin{proof}
	The characteristic matrix of equation \eqref{eq:reconstructedFeedback} is:
	\begin{equation*}
		\Delta(s) = \left[ \begin{matrix} sI_n - A+BK & -BK \\ 0_n & sI_n - A + \frac{1 - e^{-hs}}{hs} LC \end{matrix} \right] 
	\end{equation*}
	And its characteristic equation is:
	\begin{equation*}
			\text{det}\left( \Delta(s) \right) = \text{det}(sI_n - A + BK) \text{det} \left( sI_n - A + \tfrac{1 - e^{-hs}}{hs} LC \right) \\
	\end{equation*}
	Using $L$ as defined by Theorem \ref{th:observerStability}, and using Theorem 1.5 proposed by \cite{opac-b1100602}, $\text{det} \left( sI_n - A + \frac{1 - e^{-hs}}{hs} LC \right) = 0$ has strictly negative roots and the system \eqref{eq:reconstructedFeedback} is stable if $A-BK$ has strictly negative eigenvalues. 
\end{proof}



\section{Examples and Comparisons}

\subsection{Exponential convergence theorems}

	\subsubsection{Example 1:}
	Theorem \ref{sec:thm} and Corollary \ref{sec:cor} can ensure stability of system \eqref{eq:sysDelayed} for a given delay. A comparison of efficiency between the latter two and the theoretical bounds by \cite{Chen200795} can be done on system \eqref{eq:sysDelayed} with:
	\begin{equation}
		\begin{array}{ccccc}
			A = \left[ \begin{matrix} 0.2 & \ & 0 \\ 0.2 & & 0.1 \end{matrix}  \right]
			&
			,
			&
			A_D = \left[  \begin{matrix} -1 & \ & 0 \\ -1 & & -1 \end{matrix}  \right]
			&
			\text{and}
			&
			A_d = 0_2
		\end{array}.
		\label{eq:system1}
	\end{equation}
	Table \ref{tab:stab1} shows a comparison of the upper and lower bound for $h$ leading to a stable system using different theorems obtained with YALMIP by \cite{1393890}.
	\begin{table}[h]
		\centering
		\begin{tabular}{c|ccccc}
			& EV & Th\ref{sec:thm} & Th\ref{sec:thm} & Cor\ref{sec:cor}${}_{\varepsilon=1}$ & Cor\ref{sec:cor}${}_{\varepsilon \not =1}$ \\
			\hline
			$\alpha$ & $0$ & $0$ & $0.5$ & $0$ & $0$ \\
			\hline
			$h_{min}$ & $0.2$ & $0.2001 $ & $0.6370$ & $0.2002$ & $0.2001$ \\
			$h_{max}$ & $2.04$ & $1.9419$ & $1.0059$ & $1.8391$& $1.9108$	
		\end{tabular}
		\caption{Upper and lower bound for the delay for the system \eqref{eq:sys2} and a given decay-rate}
		\label{tab:stab1}
		\vspace{-0.6cm}
	\end{table}
	
	EV stands for eigenvalue analysis and $h_{min}$ is the lower bound of the interval for asymptotic stability while $h_{max}$ is the upper one. Results of Theorem \ref{sec:thm} are reported in Th\ref{sec:thm} for two different choices of $\alpha$. For $\alpha = 0$, this is equivalent to Theorem 6 derived by \cite{wirtinger}. Cor\ref{sec:cor}${}_{\varepsilon=1}$ stands for Corollary \ref{sec:cor} in the case of all the $\varepsilon$ equals to $1$ while Cor\ref{sec:cor}${}_{\varepsilon \not =1}$ is with $\varepsilon_1 = \varepsilon_3 = \varepsilon_4 = 1$ and $\varepsilon_2 = 0$.
	
	On this numerical example, it is possible to see the efficiency of the Wirtinger-based inequality by comparing the first and the second columns. To set an $\alpha$ different from $0$ is a very restrictive condition for the convergence and the range of feasible $h$ for $\alpha = 0.5$ is $4$ times shorter than the one for asymptotic convergence. The use of a structure for $Y$ leads to poorer results as expected. The choice of $\varepsilon_1 = \varepsilon_3 = \varepsilon_4 = 1$ and $\varepsilon_2 = 0$ is used in the examples from now on because in the examples presented in this article, it seems to give better results. As $\varepsilon_2$ is related to $A_d$ it is logical to set it to $0$.
	
	There are not so many theorems which directly deal with distributed delay systems and Figure \ref{fig:h_alpha} compares only the efficiency of Theorem \ref{sec:thm} with a pseudo-spectral analysis conducted by \cite{freq} and Corollary \ref{sec:cor} with different choices of $\varepsilon$ as explained in the previous paragraph. The gap between the pseudo-spectral analysis and Theorem \ref{sec:thm} is of a factor of nearly $2.5$ for the maximum $\alpha$ to a given $h$. Nevertheless, for small $h$ and small $\alpha$, the approximate is good and fit the pseudo-spectral curve. Possible explanations would be in the difference introduced in \eqref{eq:inequality1} and in the choice of the Lyapunov-Krasvoski functional \eqref{eq:lyap2}. The extension to Corollary \ref{sec:cor} introduces more conservatism and the choice of $\varepsilon$ has to be done carefully because it can affects the stability assessment significantly.

\begin{figure}
	\centering
	\includegraphics[width=9cm]{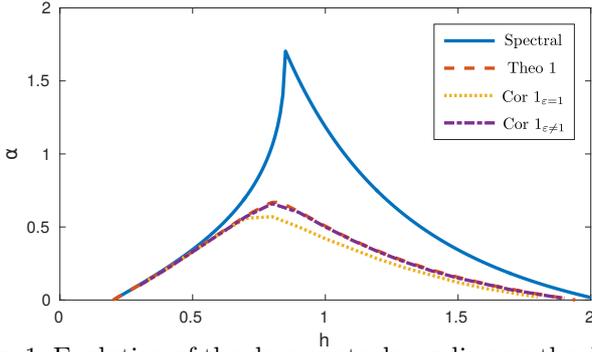}
	\vspace{-0.8cm}
	\caption{Evolution of the decay-rate depending on the delay with Theorem \ref{sec:thm} and Corollary \ref{sec:cor} for system \eqref{eq:system1}.}
	\label{fig:h_alpha}
\end{figure}

	\subsubsection{Example 2:} 
	To be compared with other results of the literature, another system with a discrete delay only is considered:
	\begin{equation}
		\begin{array}{ccccc}
			A = \left[ \begin{matrix} -3 & \ \ & -2 \\ 1 & & 0 \end{matrix}  \right]
			&
			,
			&
			A_d = \left[  \begin{matrix} -0.5 & \ \ & 0.1 \\ 0.3 & & 0 \end{matrix}  \right]
			&
			\text{and}
			&
			A_D = 0_2,
		\end{array}.
		\label{eq:system2}
	\end{equation}
Results are shown in Figure \ref{fig:h_alpha2} with the use of Theorem \ref{sec:thm}, Corollary \ref{sec:cor}, the article by \cite{mondie2005exponential} (denoted Mondie in the legend), another by \cite{lam} (denoted Xu) and stability assessment using a pseudo-spectral approach (\cite{freq}).

First of all, Theorem \ref{sec:thm} leads to good results and fit the shape of the maximum $\alpha$. The stability theorem provided by \cite{lam} gives similar results but a bit closer to the real boundary. These two theorems give a precise estimation at small $h$ which is not the case of \cite{mondie2005exponential}. Another important conclusion is the conservatism of Corollary \ref{sec:cor} compared to the others theorems for bigger $h$. The curves decrease significantly faster than the others. Nevertheless, the main interest of this corollary compared to Theorem \ref{sec:thm} is the possibility of designing controller or observer gains.

\begin{figure}
	\centering
	\includegraphics[width=9cm]{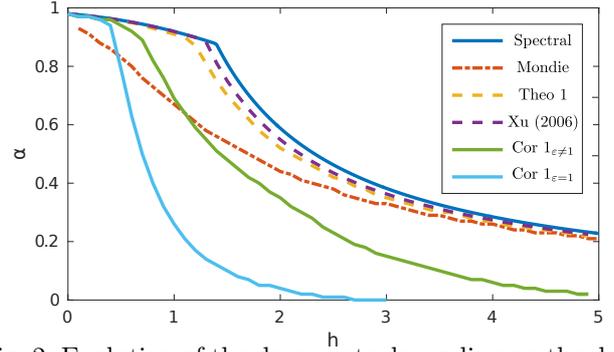}
	\vspace{-0.8cm}
	\caption{Evolution of the decay-rate depending on the delay with different theorems for system \eqref{eq:system2}.}
	\label{fig:h_alpha2}
\end{figure}

\subsection{Controller design}

	Let system \eqref{eq:sys2} defined by the matrices:
	\[
		\begin{array}{ccc}
			A = \left[ \begin{matrix} 0.2 & \ \ & 0 \\ 0.2 & & 0.1 \end{matrix}  \right]
			&
			\text{ and }
			&
			B = \left[  \begin{matrix} -1 & \ \ & 0 \\ -1 & & -1 \end{matrix}  \right]
		\end{array}.
	\]
	This system is the same than time-delay system \eqref{eq:system1}, and it is not stable without feedback for $h = 0$.
	
	\begin{table}[h]
		\centering
		\begin{tabular}{c|ccc}
			& Th\ref{sec:thmFeedback}${}_{\varepsilon \not = 1}$ & Th\ref{sec:thmFeedback}${}_{\varepsilon \not =1}$ & Th\ref{sec:thmFeedback}${}_{\varepsilon \not =1}$ \\
			\hline
			$\alpha$ & $0$ & $0.5$ & $1$  \\
			\hline
			$h_{min}$ & $0$ & $0$ & $0$ \\
			$h_{max}$ & $2.5189$ & $0.8688$ & $0.5479$
		\end{tabular}
		\caption{Upper and lower bound for the delay for the stabilized system \eqref{eq:sys2} and a given decay-rate}
		\label{tab:controller}
		\vspace{-0.7cm}
	\end{table}
	
	The simulations have been made with Matlab and 'Sdpt-3' with the Yalmip toolbox\footnote{The codes are available at \\ \textit{https://homepages.laas.fr/mbarreau/drupal/content/publications}.}.
	In Table \ref{tab:controller}, the lower bound ($h_{min}$) and the upper bound ($h_{max}$) of the delay for which there exists a matrix $K$ such that the closed-loop system is stable are summarized for different values of $\alpha$. The range of feasible delay is shrinking as the decay-rate increases. The lower bound of $h$ for the problem of stabilizing is $0$ in the examples studied which leads to the following assumption: if there exists a controller gain $K$ for a given $h > 0$ and $\alpha \supeq 0$, then System \eqref{eq:sys2} is controllable for $h=0$.

	For the observer-based control, the system to be studied has the same $A$ matrix than before and $C =  I_2$. The same $h_{max}$ and $h_{min}$ are obtained using Theorem \ref{th:observerStability} and Theorem \ref{sec:thmFeedback} for this example. Same conclusions can be drawn.

\section{Conclusion}

In this paper, we have provided a set of LMIs to assess the exponential convergence of time-delay systems using the Wirtinger-based inequality. We have also shown a comparable performance with existing theorems. However, the feature of the main result of this paper is the use for stabilization and observation of a special class of time-delay systems. An extension to non-linear systems is not straightforward but should be considered. Further work will improve the efficiency of the control and the bound for the exponential estimate by using Bessel-based inequalities. Another possible research interest would be in proving the assumption made in the last part and some robustness study on the unknown parameter $h$ for example.

\appendix

\section{Bessel-like Inequality}
\label{sec:app1}

The aim for this part is to state a Bessel-like inequality with the scalar product with $R \succ 0$:
\[
	\left \langle x_t, y_t \right \rangle = \int_{-h}^0 \int_{\theta}^0 e^{2\alpha s} x_t^{\top}(s) R y_t(s) ds d\theta.
\]

A vector $p(x_t)$ can be assimilated as the projection of $x_t$ on the function $e^{-2 \alpha \cdot }$ with the following notations:
\begin{equation}
	\begin{array}{rccl}
		p(x_t):	& [-h, 0] 	& \to 		& \mathbb{R}^n\\
				& \tau	&  \mapsto 	& \displaystyle \int_{-h}^0 \int_{\theta}^0 x_t(s) ds d\theta \frac{e^{-2\alpha \tau}}{\Xi}
	\end{array}
\end{equation}
 and $\displaystyle \Xi = \int_{-h}^0 \int_{\theta}^0 e^{-2 \alpha s} ds d\theta = \frac{e^{2 \alpha h}-2h\alpha-1}{4 \alpha^2}$.

The norm of the error is positive so that we have:
\begin{equation}
	\begin{array}{ccl}
		\langle x_t-p(x_t), x_t-p(x_t) \rangle & = & \langle x_t, x_t \rangle - 2\langle x_t, p(x_t) \rangle \\
		& & + \langle p(x_t), p(x_t) \rangle \\
		& \supeq & 0.
	 \end{array}
	 \label{eq:bessel1}
\end{equation}

Expending two terms leads to:
\begin{equation}
	\begin{array}{cl}
		\langle x_t, p(x_t) \rangle & = \displaystyle \frac{1}{\Xi} \int_{-h}^0 \int_{\theta}^0 x_t^{\top}(s) R \int_{-h}^0 \int_{\theta_1}^0 x_t(s_1) ds_1 d\theta_1 ds d\theta \\
		& = \displaystyle \frac{1}{\Xi} \int_{-h}^0 \int_{\theta}^0 x_t^{\top}(s) ds d\theta R \int_{-h}^0 \int_{\theta}^0 x_t(s) ds d\theta,
	\end{array}
	 \label{eq:bessel2}
\end{equation}
and
\begin{equation}
	\begin{array}{ccl}
		\langle p(x_t), p(x_t) \rangle & = & \displaystyle \frac{1}{\Xi^2} \int_{-h}^0 \int_{\theta}^0 \left[ \int_{-h}^0 \int_{\theta_1}^0 x_t^{\top}(s_1) ds_1 d\theta_1 \right] R \times \\
		& & \displaystyle \left[ \int_{-h}^0 \int_{\theta_1}^0 x_t(s_1) ds_1 d\theta_1\right] e^{-2 \alpha s} ds d\theta \\
		& = & \displaystyle \frac{1}{\Xi} \int_{-h}^0 \int_{\theta}^0 x_t^{\top}(s) ds d\theta R \int_{-h}^0 \int_{\theta}^0 x_t(s) ds d\theta.
	\end{array}
	 \label{eq:bessel3}
\end{equation}

Using equations \eqref{eq:bessel1}, \eqref{eq:bessel2} and \eqref{eq:bessel3} lead to:
\begin{equation}
	\begin{array}{ll}
		\displaystyle \int_{-h}^0 \int_{\theta}^0 e^{-2\alpha s} & x_t^{\top}(s) R x_t(s) ds d\theta \supeq \\
		& \displaystyle \frac{1}{\Xi} \int_{-h}^0 \int_{\theta}^0 x_t^{\top}(s) ds d\theta R \int_{-h}^0 \int_{\theta}^0 x_t(s) ds d\theta.
	\end{array}
\end{equation}

\bibliography{report_draft}

\end{document}